\newtheorem{theorem}{\large Theorem}
\newtheorem{proposition}[theorem]   {\large Proposition}
\newtheorem{corollary}[theorem]{\large  Corollary}
\font\tenmath=msbm10
\def \\ { \cr }
\newcommand{\EE}{{\mathbb E}}
\newcommand{\ZZ}{{\mathbb Z}}
\newcommand{\PP}{{\mathbb P}}
\newcommand{\RR}{{\mathbb R}}
\newcommand{\A}{{\cal A}}
\newcommand{\I}{{\cal I}}
\newcommand{\K}{{\cal K}}
\newcommand{\N}{{\cal N}}
\newcommand\hatf{{\widehat f}}
\newcommand\hatV{{\widehat V}}
\newcommand\hatSigma{\widehat\Sigma}
\newcommand\hateta{{\widehat\eta}}
\newcommand\og{{\overline \Gamma}}
\newcommand\tildePP{{\widetilde\PP}}
\newcommand\hatVar{\mathop{\widehat{\rm Var}}}
\newcommand\ind{{\mathbbm{1}}}
\newcommand\inprobabilityto[1]{\xrightarrow[{#1}\to\infty]{\PP}}
\newcommand\indistributionto[1]{\xrightarrow[{#1}\to\infty]{d}}
\newcommand\ml{\left\{\begin{array}}
\newcommand\mr{\end{array}\right\}}
\newcommand\abs[1]{\left\vert#1\right\vert}
\newcommand\ds{\displaystyle}
\begin{document}

\title{A Gibbs approach to Chargaff's second parity rule}
\author{Andrew Hart%
\thanks{E-mail address:  \texttt{ahart@dim.uchile.cl}}
\qquad
Servet Mart\'{\i}nez%
\thanks{E-mail address:  \texttt{smartine@dim.uchile.cl}}
\qquad
Felipe Olmos%
\thanks{E-mail address:  \texttt{felipe@olmos.cl}}
\\
\ \\
\parbox{5in}{%
{\rmfamily\upshape\normalsize
${}^{*\dagger\ddagger}$ Departamento Ingenier{\'\i}a Matem\'atica and
Centro 
Modelamiento Matem\'atico, \newline
UMI 2071 CNRS-UCHILE,
Facultad de Ciencias F\'isicas y Matem\'aticas, \newline
Universidad de Chile,
Casilla 170, Correo 3, Santiago, Chile.}}
}

\date{\today}
\maketitle

\begin{abstract}
Chargaff's second parity rule (CSPR) asserts that the frequencies
of short polynucleotide chains are the same as those of the complementary reversed
chains. Up to now, this hypothesis has only been observed empirically and there
is currently no explanation for its
presence in DNA strands. Here we argue that CSPR is
a probabilistic consequence of the reverse complementarity between
paired strands, because the Gibbs distribution associated with
the chemical energy between the bonds satisfies CSPR.
We develop a statistical test to study the validity of CSPR under the
Gibbsian assumption and we
apply it to a large set of bacterial genomes taken from
the GenBank repository.  
\end{abstract}

\bigskip

\noindent {\footnotesize
Keywords: Reverse complementary relation, Chargaff's parity rules, 
Gibbs measure, Central Limit Theorem.

\bigskip

\noindent 2010 MSC:  62G10; 62M07; 62P10; 92D20.
}

\bigskip

\renewcommand\baselinestretch{1.5}
\footnotesize\normalsize

\section{Introduction}

Double helical DNA is made up of two complementary polynucleotide chains, the 
primary and the secondary strands, each having opposing polarities. Chargaff's 
first parity rule is that ``the numbers of $A$'s and $T$'s and the numbers of 
$C$'s and $G$'s match exactly in every DNA duplex'' \cite{Chargaff}. Chargaff's 
second parity rule (CSPR) states that this is valid when looking at a single 
strand, see \cite{Chargaff2}, and that this happens not only for mononucleotides 
but also for short polynucleotide chains.  Chargaff's first parity rule is a 
simple consequence of the double-stranded organization of genomic sequences and 
the chemistry of nucleic acids which only permits~$A$ to bond with~$T$ and~$C$ 
to bond with~$G$ In this work we argue that the reverse complementary 
relationship between nucleic acids on opposing strands also explains Chargaff's 
second parity rule when we assume that randomness manifest in DNA sequences is 
captured by the Gibbs distribution of the chemical energy potential.

CSPR was first observed experimentally in \emph{Bacillus
subtilis}~\cite{Chargaff2} and was subsequently confirmed in sufficiently long
sequences available in GenBank for small polymer chains of 3 to 6
bases~\cite{prabhu1993}. More recent empirical studies assessing its validity can
be found in~\cite{mitchell,PNAS_Albrecht} and \cite{hart&martinez2011}.

A number of possible mechanisms explaining strand symmetry have been proposed, 
for example, no strand biases for mutation and 
selection~\cite{lobry1995,sueoka1995} and selection of step-loop 
structures~\cite{forsdyke&bell2004}. Further discussion of various mechanisms 
that could support the origins of this intrastrand symmetry are discussed 
in~\cite{zhang&huang2010} and references therein. In~\cite{bell&forsdyke1999} 
and~\cite{Powdel&etal2009}, a number of mechanisms causing violation of CSPR in 
short polymers are described.

Here we propose that CSPR manifests due to the constraints that reverse 
complementarity imposes on the Gibbs distribution. In Section 
\ref{sec:framework} we give the framework and state the results.  There, the 
impirical polymer frequencies are replaced by polymer occurrence probabilities 
on a translation invariant probability distribution.  We then express CSPR using 
this notion and prove that CSPR written in this way follows from the fact that 
energy symmetry is preserved for the Gibbsian distribution.  This is done in 
Theorem~\ref{symmetrytheo}.  In Section \ref{sec:din} we give a characterisation 
of CSPR for dinucleotides, we prove an extension of the Central Limit Theorem 
for Gibbs measures to vector-valued random variables and we derive an explicit 
expression for the asymptotic covariance matrix. In Section \ref{sec:testgibbs} 
we supply a statistical test for the validity of CSPR for dinucleotides under 
the hypothesis that the nucleotides of the strand are distributed as a 
stationary Gibbsian process. We have applied the test extensively to bacterial 
genomes available from GenBank. The hypothesis of CSPR in the Gibbsian setting 
is confirmed for a large number of genomes.  Further analysis would be necessary 
in order to determine whether genomes rejected by the test were because 
they fail to comply with CSPR, because they are not Gibbsian, or both.

\section{Chargaff's second parity rule}
\label{sec:framework}

\subsection{Preliminaries}

Let $\A$ be a finite set (alphabet) endowed with an involution
$\Gamma:\A\to \A$: $\Gamma$ is one-to-one and $\Gamma^{-1}=\Gamma$.
In the genomic setting $\A=\{A,C,G,T\}$
and $\Gamma$ is an involution given by the complementary function
$\Gamma(A)=T$ and $\Gamma(C)=G$.

Let $x=(x_j: j=0,\ldots,n-1)\in \A^n$ be the sequence of nucleotides
on a strand of the genome (for bacterial DNA $n\approx 10^6 $).
The sequence $x$ complies with CSPR whenever 
the frequencies of all short polymers agree with the frequencies of their reverse complements.  In other words,
for $k$ small (order of $10)$ and all polymers $(a_0,\ldots, a_{k-1})\in \A^k$:
\begin{equation}
\label{eqhmo1}
\frac{\#\left\{j\! \leq \! n\!-\!k:
x_j\!=\!a_0,\ldots, x_{j+k-1}\!=\!a_{k-1})\right\}}{n-k+1}
=\frac{\#\left\{j\!\leq \!n\!-\!k:
x_j\!=\!\Gamma(a_{k-1}),\ldots, x_{j+k-1}\!=\!\Gamma(a_0)\right\}}
{n-k+1}.
\end{equation}
(Here $\#B$ denotes the cardinality of the set $B$). 
Observe that the frequency is computed by moving a window of length $k$
along the strand.

\subsection{CSPR as a symmetric probability relation}

We will derive CSPR from the complementary relation in
the thermodynamical formalism. In this theoretical framework
the strands are modeled by bi-infinite sequences and the frequencies
of a word are the probabilities that they appear at an arbitrary
place. We restrict ourselves to translation invariant
probability distributions that are
Gibbs measures with respect to the chemical energy.

The strands are modeled by sequences in $\A^\ZZ$. Thus, $x=(x_j: j\in \ZZ)$
represents the primary strand in the sense~$5'$ to~$3'$
while $y=(y_j: j\in \ZZ)$ represents the complementary strand in the sense~$3'$ to~$5'$.
They are related by reverse complementarity:  $y_j=\Gamma(x_{-j})$ for $j\in \ZZ$.
Let us write this rule in another way. Let
$\I:\A^\ZZ\to \A^\ZZ$ be the space reversal
involution given by $(\I(x))_j=x_{-j}$ and let
$\og:\A^\ZZ\to \A^\ZZ$ be such that $(\og(x))_j=\Gamma(x_j)$,
for $x\in\A^\ZZ$, $j\in \ZZ$.
Then, the rule of reverse complementarity may be written as $y=\og \circ \I(x)$.

A genome duplex is the pair $(x,y)$ and we denote by ${\widetilde \Psi}(x,y)$
its chemical energy, which  results from the interactions between the
nucleotides on both strands. Since the interactions
between the nucleotides are symmetric we assert that
\begin{equation}
\label{eqaragsym1}
{\widetilde \Psi}(x,y)={\widetilde \Psi}(y,x)\,.
\end{equation}
Insight into this equality may be obtained from the discussion of energy on finite pieces which appears
in~\cite{bowen2008}. Let $\Psi_l(x[-l,l]; y[-l,l])$ be
the energy in the portion $[-l,l]=\{-l,\ldots,l\}$ of the duplex.
In analogy with \cite{bowen2008} Page $5$, this energy can be assumed
to be given by
\begin{eqnarray}
\nonumber
\Psi_l(x[-l,l]; y[-l,l])&=&
\label{approxenefin}
\!\!\sum_{-l\leq j\leq k\leq l} \!\!\!\!\psi^s(k-j; x_k,x_j)
+\!\!\sum_{-l\leq j\leq k\leq l} \!\!\!\! \psi^s(k-j; y_{-j},y_{-k})\\
&{}& +\frac{1}{2}\!\sum_{-l\leq j, k\leq l}(\psi^o(|k-j|; x_k,y_{-j})\,.
\end{eqnarray}
The first two summations are due to the interactions between
sites on the same strand while the last one expresses the interactions
between sites on opposite strands. The quantity $\psi^s(r; a,b)$ is
the interaction between the nucleotides $a,b$ at distance $r$ on the
same strand and $\psi^o(r; a,b)$ is the interaction
between the nucleotides $a,b$ in opposite strands such that the distance
from the site containing one to the site in front of the other is~$r$
(recall that $y_{-j}$ is in front of $x_j$, so the distance
from site~$k$ containing $x_k$ to the site~$j$, which is in front
of the site containing $y_{-j}$, is $|k-j|$).
The expression (\ref{approxenefin}) is clearly symmetric in~$x$ and~$y$.

Let us express the symmetry relation (\ref{eqaragsym1}) in another way.
Since $y=\og \circ \I(x)$, the energy can be simply expressed as 
$\Psi(x)={\widetilde \Psi}(x,\og \circ \I(x))$ and
the symmetric dependence ${\widetilde \Psi}(x,y)={\widetilde \Psi}(y,x)$
between the strands implies that $\Psi$ satisfies the invariance property
$$
\forall x\in \A^\ZZ\,: \;\;\; \Psi(x)=\Psi(\og \circ \I (x))\,;
\hbox{ or equivalently } \,\Psi=\Psi\circ \og \circ \I\,.
$$

Next, the set $\A^\ZZ$ is endowed with the product $\sigma$-algebra and let $T:\A^\ZZ\to 
\A^\ZZ$ be the translation operator given by $(T(x))_j=x_{j+1}$ for all $j\in \ZZ$.  
Let $\PP$ be a translation invariant distribution on $\A^\ZZ$, that is
$$
\PP(T^{-1}B)=\PP(B), \;\;\; \forall \mbox{ measurable } B\subseteq \A^\ZZ \,.
$$ 

In the spirit of (\ref{eqhmo1}), we say that it satisfies CSPR if
\begin{equation}
\label{eqcspr222}
\forall x\!\in \!\A^\ZZ\, \, \forall k\geq1\,, \, \forall \,
(a_0,\ldots, a_{k-1})\in \A^k\,:
\;\; \PP(x_0\!=\!a_0,\ldots, x_{k-1}\!=\!a_{k-1})=
\PP(x_0\!=\!\Gamma(a_{k-1}),\ldots, x_{k-1}\!=\!\Gamma(a_0))\,.
\end{equation}

We claim that if $\PP$ is a translation invariant distribution on $\A^\ZZ$,
then property (\ref{eqcspr222}) is equivalent to
$\PP$ being $\og \circ \I-$invariant, that is,
it satisfies $\PP((\og \circ \I)^{-1}B)=\PP(B)$
for all measurable subsets~$B$ of $\A^\ZZ$. Indeed, from the equality
\begin{equation}
\label{eqset}
\og \circ \I^{-1}\{x: x_j=a_j,\ldots, x_k=a_k\}=
\{x: x_{-k}=\Gamma(a_k),\ldots, x_{-j}=\Gamma(a_j)\}
\end{equation}
taken together with the translation invariance property, one can show that
if~$\PP$ is $\og \circ \I$-invariant then (\ref{eqcspr222}) holds.
Conversely, the same translation invariance property combined with equality
(\ref{eqset}) may be used to prove that (\ref{eqcspr222}) implies
$\PP_\Psi((\og \circ \I)^{-1}B)=\PP(B)$ for
all cylinders~$B$.  Carath\'eodory's extension theorem then shows that this holds for
all measurable sets $B$ and the claim follows.

The following result derives compliance with CSPR from the symmetry of energy 
and will be proved in the next section, but in a more general framework than 
what is needed for dealing with genomic sequences. In the statement of the result 
we assume that the energy~$\Psi$ satisfies a $\theta$-H\"older property, which 
will be properly defined later in (\ref{holderdef}).

\begin{theorem}
\label{thm:chargaff222}
Assume $\Psi: \A^\ZZ\to \RR$ is $\theta$-H\"older
for some $\theta\in (0,1)$. Then, the
invariance $\Psi=\Psi\circ \og \circ \I$ implies that the
unique translation invariant Gibbs measure
$\PP_\Psi$ defined on $\A^\ZZ$ complies with CSPR, that is,
$\PP=\PP_\Psi$ satisfies condition (\ref{eqcspr222}).
\end{theorem}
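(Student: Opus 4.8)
The plan is to reduce CSPR, via the equivalence already established in the text, to the statement that the unique translation-invariant Gibbs measure $\PP_\Psi$ is $\og\circ\I$-invariant. So the whole argument is a uniqueness argument: I will show that $(\og\circ\I)_*\PP_\Psi$ is again a translation-invariant Gibbs measure for the same potential $\Psi$, and then invoke the uniqueness of such a measure (which holds because $\Psi$ is $\theta$-H\"older, by the standard Bowen/Ruelle theory of one-dimensional Gibbs measures that presumably is quoted or set up around~(\ref{holderdef})).

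First I would record the elementary functorial facts about the map $\og\circ\I$. Both $\og$ and $\I$ are continuous involutions of $\A^\ZZ$, hence so is $\sigma := \og\circ\I$, and it is measurable for the product $\sigma$-algebra; therefore $\sigma_*\PP_\Psi$ is a well-defined Borel probability measure. Next, $\sigma$ intertwines the shift with its inverse: since $\I\circ T = T^{-1}\circ\I$ and $\og\circ T = T\circ\og$, one gets $\sigma\circ T = T^{-1}\circ\sigma$. Consequently, for measurable $B$,
\[
(\sigma_*\PP_\Psi)(T^{-1}B) = \PP_\Psi\big(\sigma^{-1}T^{-1}B\big) = \PP_\Psi\big(T\,\sigma^{-1}B\big) = \PP_\Psi\big(\sigma^{-1}B\big) = (\sigma_*\PP_\Psi)(B),
\]
using translation invariance of $\PP_\Psi$ (and $T$-invariance of $\PP_\Psi$ is equivalent to $T^{-1}$-invariance). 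Hence $\sigma_*\PP_\Psi$ is translation invariant.

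The core step is to check that $\sigma_*\PP_\Psi$ satisfies the Gibbs (DLR / Bowen) inequalities with respect to $\Psi$. This is where the hypothesis $\Psi = \Psi\circ\sigma$ is used crucially. The Gibbs property says there are constants $c_1,c_2>0$ with
\[
c_1 \le \frac{\PP_\Psi\big(x_0=a_0,\dots,x_{m-1}=a_{m-1}\big)}{\exp\!\big(-P\,m + S_m\Psi(z)\big)} \le c_2
\]
for every cylinder and every $z$ in that cylinder, where $P$ is the pressure and $S_m\Psi$ the Birkhoff sum. Applying $\sigma$ and using the set identity (\ref{eqset}) to see that $\sigma^{-1}$ of a length-$m$ cylinder is again a length-$m$ cylinder, together with $\Psi\circ\sigma=\Psi$ to see that the Birkhoff sums transform correctly (here I must be slightly careful: $\sigma$ reverses the shift, so $S_m\Psi\circ\sigma$ equals $S_m\Psi$ evaluated at a reversed/shifted point that still lies in the relevant cylinder — this is exactly the point where the symmetry $\Psi=\Psi\circ\sigma$ pays off), one concludes that $\sigma_*\PP_\Psi$ satisfies the same two-sided bound with the same $P$ and the same $c_1,c_2$. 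Therefore $\sigma_*\PP_\Psi$ is a translation-invariant Gibbs measure for $\Psi$. By uniqueness, $\sigma_*\PP_\Psi=\PP_\Psi$, i.e. $\PP_\Psi$ is $\og\circ\I$-invariant, and by the equivalence noted before the theorem, $\PP_\Psi$ satisfies (\ref{eqcspr222}), which is CSPR.

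I expect the main obstacle to be the bookkeeping in that core step: verifying that a space-reversing conjugacy genuinely preserves the Gibbs bounds. Because $\sigma$ conjugates $T$ to $T^{-1}$ rather than to $T$, one is really comparing the Gibbs measure of $(\A^\ZZ,T,\Psi)$ with that of the "reversed" system; the content of $\Psi=\Psi\circ\sigma$ is precisely that these two Gibbsian data coincide, so the Birkhoff sums $S_m\Psi(z)$ and $S_m\Psi(\sigma z)$ match up cylinder-by-cylinder after relabelling indices via (\ref{eqset}). Making this index-chasing clean — and stating the Gibbs inequality in a symmetric enough form that the reversal is transparent — is the only delicate part; everything else is formal. (An alternative, if the uniqueness is phrased variationally, is to note that $\mu\mapsto\sigma_*\mu$ preserves entropy and, by $\Psi=\Psi\circ\sigma$, preserves $\int\Psi\,d\mu$, hence preserves the pressure functional $h(\mu)+\int\Psi\,d\mu$; its unique maximiser is therefore fixed by $\sigma_*$, giving the same conclusion.)
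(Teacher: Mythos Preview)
Your proposal is correct and follows essentially the same route as the paper (which proves the more general Theorem~\ref{symmetrytheo} and deduces Theorem~\ref{thm:chargaff222} as the full-shift special case): push $\PP_\Psi$ forward by $\sigma=\og\circ\I$, verify translation invariance via the intertwining $\sigma\circ T=T^{-1}\circ\sigma$, check the Bowen--Gibbs bounds~(\ref{bowen11}) for the pushforward using $\Psi=\Psi\circ\sigma$, and invoke uniqueness. The only caveat is that the paper's index-chasing produces modified constants $c_1\tilde c_1$, $c_2\tilde c_2$ (with $\tilde c_i=e^{\pm(\max\Psi-\min\Psi)}$) rather than the original $c_1,c_2$ you predict, though this is harmless; your variational alternative at the end is not used in the paper but would also work.
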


\subsection{Gibbs measures and CSPR}

We employ a more general framework than the one 
stated in Theorem \ref{thm:chargaff222}.
Let $\A$ be a finite alphabet and
$\Xi=(\Xi(a,b): a,b\in \A)$ be an aperiodic $0-1$-valued matrix.
The shift of finite type defined by~$\Xi$ is the set
${\cal X}_{\Xi}= \left\{ x \in \A^\ZZ :
\Xi(x_j,x_{j+1}) = 1 \;  \forall j \in \ZZ \right\}$
endowed with the metric $\Delta_\theta(x,z) = \theta^{K(x,z)}$, where
$K(x,z)=\sup\{k\geq 0: x_i=z_i\;\, \forall |i| \leq k\}$ and $\theta\in (0,1)$
is an arbitrary but fixed value.  This metric induces the product topology.

Let $\theta \in (0,1)$ be fixed. Consider the set of
H\"older (continuous) functions in $({\cal X}_{\Xi},\Delta_\theta)$,
\begin{equation}
\label{holderdef}
F_{\theta}=\left\{g\in C({\cal X}_{\Xi}) : |g|_\theta < \infty \right\}
\hbox{ where } |g|_{\theta}  = 
\sup\left\{\frac{|g(x)-g(z)|}{\Delta_\theta(x,z)}: x,y\in {\cal X}_{\Xi}, x\neq z\right\}\,.
\end{equation}
The linear set $F_{\theta}$ is a Banach space when it is endowed with the norm
$\|g \|_{\theta} = \|g\|_{\infty} + |g|_{\theta}$.

Each Gibbs measure on ${\cal X}_\Xi$ is 
defined by an energy function $\Psi\in F_\theta$.  The value $\Psi(x)$
represents the energy of the system in state $x \in {\cal X}_\Xi$.
In the thermodynamic formalism of shifts of finite type, it has been shown
(see Theorem~1.2 in \cite{bowen2008}, Pages 5--6)
that there exists a unique translation invariant probability
measure $\PP_\Psi$ that satisfies
\begin{equation}
\label{bowen11}
\exists \, 0<c_1<c_2<\infty\,, p\in \RR \,, \;
\forall z\in {\cal X}_\Xi\,, \forall \, k\geq 0\,: \;\;
c_1 \leq \frac{\PP_\Psi \left(x: x_0 = z_0 , \ldots , x_k = z_k\right)}
{e^{-pk + \sum_{i=0}^{k-1} \Psi(T^i z)}} \leq c_2\,.
\end{equation}
A detailed proof of this result as well as a complete
exposition of this topic is given in \cite{bowen2008}, Pages 3--16.
In this reference, it is also proven that the constant
$p=p(\Psi)$ is the pressure of $\Psi$
and that the probability measure $\PP_\Psi$ is the unique
translation invariant probability measure satisfying the variational
principle $p(\Psi)=h_{\PP_\Psi}(T)+\int \Psi d\PP_\Psi$, where $h_{\PP_\Psi}(T)$
is the entropy of $T$ for the translation invariant distribution $\PP_\Psi$.

From now on we assume that the aperiodic matrix $\Xi$ also satisfies
\begin{equation}
\label{symmmatr}
\forall \, a,b\in \A\,: \;\;\; \Xi(a,b)=\Xi(\Gamma(b),\Gamma(a))\,.
\end{equation}
We note that $\Xi(a,b)=1$ for all $a,b\in \A$ in the genomic framework,
so in this context $\Xi$ always satisfies condition (\ref{symmmatr}). Hence,
Theorem~\ref{thm:chargaff222} is a straight forward
consequence of the following result.

\begin{theorem}
\label{symmetrytheo}
Assume $\Xi$ satisfies (\ref{symmmatr}).
Let $\Psi\in F_\theta$. Assume that $\Psi$ is
$\og \circ \I$-invariant: $\Psi(x)=\Psi(\og \circ \I (x))$ for all
$x \in {\cal X}_\Xi$. Then the unique translation invariant Gibbs probability measure 
$\PP_\Psi$ is $\og \circ \I$-invariant
and hence complies with CSPR:
$$
\forall \, k\geq 0\,, (z_0,\ldots, z_k)\in \A^{k+1}\,:\;\;\;
\PP_\Psi\left(x: x_0 = z_0 , \ldots , x_k = z_k \right)=
\PP_\Psi\left(x: x_0 = \Gamma(z_k) , \ldots , x_k = \Gamma(z_0) \right)\,.
$$
\end{theorem}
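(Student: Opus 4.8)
The plan is to set $J=\og\circ\I$, show that the push-forward $\nu:=\PP_\Psi\circ J^{-1}$ is again a translation invariant measure satisfying the Gibbs estimates for $\Psi$, and then invoke uniqueness. I would begin by recording three elementary facts about $J$, which is a continuous involution of $\A^\ZZ$ (so $J^{-1}=J$). First, $J$ maps ${\cal X}_\Xi$ onto itself: for $x\in{\cal X}_\Xi$ and any $j$, $\Xi\big((Jx)_j,(Jx)_{j+1}\big)=\Xi\big(\Gamma(x_{-j}),\Gamma(x_{-j-1})\big)=\Xi(x_{-j-1},x_{-j})=1$ by hypothesis (\ref{symmmatr}) --- this is the one place (\ref{symmmatr}) is used. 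Second, a coordinate computation gives $J\circ T=T^{-1}\circ J$, hence $J\circ T^{m}=T^{-m}\circ J$ for every $m\in\ZZ$, so $J$ conjugates the shift to its inverse. Third, the standing hypothesis is exactly $\Psi\circ J=\Psi$. Thus $J$ restricts to a self-homeomorphism of $({\cal X}_\Xi,\Delta_\theta)$ fixing $\Psi$.

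Next I would study $\nu=\PP_\Psi\circ J^{-1}$, a probability measure on ${\cal X}_\Xi$. Combining the $T$-invariance of $\PP_\Psi$ with $T\circ J=J\circ T^{-1}$ and $J^{2}=\mathrm{id}$ shows at once that $\nu$ is translation invariant. The heart of the proof is then to verify that $\nu$ satisfies the Gibbs inequalities (\ref{bowen11}) with the \emph{same} pressure $p=p(\Psi)$ (for some constants); by the uniqueness quoted just after (\ref{bowen11}) this forces $\nu=\PP_\Psi$, i.e.\ $\PP_\Psi$ is $\og\circ\I$-invariant, and CSPR then follows from the equivalence already established in the paragraph containing (\ref{eqset}).

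To check (\ref{bowen11}) for $\nu$, I would fix $z\in{\cal X}_\Xi$, $k\geq0$ and write $[z_0,\dots,z_k]$ for the corresponding cylinder. By the set identity (\ref{eqset}) and translation invariance of $\PP_\Psi$, $\nu([z_0,\dots,z_k])=\PP_\Psi\big(J^{-1}[z_0,\dots,z_k]\big)=\PP_\Psi\big([\Gamma(z_k),\dots,\Gamma(z_0)]\big)$. The point $\widehat z:=T^{-k}(Jz)$ lies in ${\cal X}_\Xi$ and realises this reversed-complemented cylinder, so (\ref{bowen11}) for $\PP_\Psi$ yields $c_1\le\nu([z_0,\dots,z_k])\,e^{\,pk-\sum_{i=0}^{k-1}\Psi(T^i\widehat z)}\le c_2$. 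One then transports the ergodic sum back through $J$: using $\Psi\circ J=\Psi$ and $J\circ T^{j}=T^{-j}\circ J$ gives $\Psi(T^i\widehat z)=\Psi(T^{\,i-k}Jz)=\Psi(T^{\,k-i}z)$, so that $\sum_{i=0}^{k-1}\Psi(T^i\widehat z)=\sum_{m=1}^{k}\Psi(T^{m}z)=\sum_{i=0}^{k-1}\Psi(T^{i}z)+\big(\Psi(T^{k}z)-\Psi(z)\big)$. Since the boundary term has modulus at most $2\|\Psi\|_{\infty}$, it follows that $\nu$ obeys (\ref{bowen11}) with pressure $p$ and constants $c_1e^{-2\|\Psi\|_{\infty}},\,c_2e^{2\|\Psi\|_{\infty}}$; hence $\nu=\PP_\Psi$. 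Applying $\PP_\Psi\big((\og\circ\I)^{-1}B\big)=\PP_\Psi(B)$ to $B=[z_0,\dots,z_k]$ and using (\ref{eqset}) together with a shift by $k$ then gives the displayed equality. (When the word is not $\Xi$-admissible both cylinders are empty by (\ref{symmmatr}), so that case is immediate.)

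The step I expect to be the main obstacle is this last verification: one must handle carefully the fact that $\og\circ\I$ reverses the coordinate order and conjugates $T$ to $T^{-1}$ (not to $T$), and then notice that the leftover discrepancy between $\sum_{i=0}^{k-1}\Psi(T^i\cdot)$ and $\sum_{m=1}^{k}\Psi(T^m\cdot)$ is merely a bounded boundary term --- harmless precisely because (\ref{bowen11}) only demands the \emph{existence} of comparison constants. Alternatively, one can sidestep this estimate by checking that $\nu$ maximises the variational functional $h_{(\cdot)}(T)+\int\Psi\,d(\cdot)$: indeed $\int\Psi\,d\nu=\int\Psi\circ J\,d\PP_\Psi=\int\Psi\,d\PP_\Psi$, while $J$ realises an isomorphism between $({\cal X}_\Xi,T^{-1},\PP_\Psi)$ and $({\cal X}_\Xi,T,\nu)$, whence $h_{\nu}(T)=h_{\PP_\Psi}(T^{-1})=h_{\PP_\Psi}(T)$; uniqueness of the equilibrium state then gives $\nu=\PP_\Psi$ directly.
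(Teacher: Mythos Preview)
Your argument is correct and is essentially the paper's own proof: define the push-forward $\nu=\PP_\Psi\circ(\og\circ\I)^{-1}$, check translation invariance via the conjugacy $T\circ J=J\circ T^{-1}$, verify the Gibbs bounds (\ref{bowen11}) for $\nu$ up to a bounded boundary correction, and conclude by uniqueness. The only cosmetic difference is bookkeeping: the paper inserts a $T^{-1}z$ so that the two ergodic sums match exactly and then bounds the resulting ratio by $e^{\pm(\max\Psi-\min\Psi)}$, whereas you compute directly and absorb the single term $\Psi(T^{k}z)-\Psi(z)$ into constants $e^{\pm 2\|\Psi\|_\infty}$; these are equivalent. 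You also make explicit (as the paper does not) that (\ref{symmmatr}) is exactly what guarantees $J({\cal X}_\Xi)={\cal X}_\Xi$.

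Your closing alternative via the variational principle is a genuinely different route not in the paper and is worth noting: it trades the Gibbs-bound computation for the two observations $\int\Psi\,d\nu=\int\Psi\,d\PP_\Psi$ (from $\Psi\circ J=\Psi$) and $h_\nu(T)=h_{\PP_\Psi}(T^{-1})=h_{\PP_\Psi}(T)$ (from the measure-theoretic isomorphism induced by $J$), after which uniqueness of the equilibrium state gives $\nu=\PP_\Psi$ directly. This is arguably cleaner, since it bypasses the boundary-term estimate entirely.
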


\begin{proof}
To begin, let $\PP=\PP_\Psi$ denote the unique $T$-invariant probability measure 
on ${\cal X}_\Xi$ that satisfies (\ref{bowen11}).  
Define the probability measure~$\tildePP$ as $\tildePP(B)=\PP((\og \circ \I)^{-1}B)$ 
for all measurable sets~$B$ in ${\cal X}_\Xi$.

\noindent Claim $1$: $\tildePP$ is translation invariant. This can be proved 
as follows. Note that $\I^{-1}=\I$ and $\og^{-1}=\og$, while $\og$ commutes with 
$\I$, $T$ and $T^{-1}$. So $(\og \circ \I)^{-1}=(\og \circ \I)$. We also have 
$\I\circ T^{-1}=T\circ \I$ and hence
$$
(\og \circ \I)^{-1}\circ T^{-1}=T\circ \og \circ \I\,.
$$
Since~$\PP$ is $T-$invariant, it is also $T^{-1}$-invariant, so
$$
\tildePP(T^{-1}(B))=\PP((\og \circ \I)^{-1} \circ T^{-1}(B))=
\PP(T\circ \og \circ \I(B))=
\PP(\og \circ \I(B))=\PP((\og \circ \I)^{-1}(B))=\tildePP(B)\,.
$$
which yields the claim.

\noindent Claim $2$: $\tildePP$ satisfies
$$
\exists \, 0<\widetilde{c_1}<\widetilde{c_2}<\infty \;
\forall z\in {\cal X}_\Xi\, \forall \, k\geq 0\,: \;\;
\widetilde{c_1} \leq \frac{\tildePP\left(x: x_0 = z_0, \ldots ,x_k = z_k \right)}
{e^{-pk + \sum_{i=0}^{k-1} \Psi(T^i z)}} \leq \widetilde{c_2}\,.
$$
Note that once this claim has been shown, the result will immediately follow 
because uniqueness of~$\tildePP$ implies $\PP=\tildePP$, and so~$\PP$ is $\og 
\circ \I$-invariant. To prove the claim, first observe that since $\Gamma^{-1}=\Gamma$ 
and $\PP$ is $T$-invariant,
\begin{eqnarray*}
\nonumber
&{}&
{\widetilde {\PP}}\left(x: x_0 = z_0, \ldots, x_k = z_k \right)=
\PP\left(x: (\og(\I(x)))_0=z_0, \ldots,
(\og(\I(x)))_k = z_k \right)\\
\nonumber
&{}&\; =\PP\left(x: \Gamma(x_0)=z_0, \ldots, \Gamma(x_{-k})=z_k \right)
=\PP\left(x: x_0=\Gamma(z_0),\ldots,x_{-k} =\Gamma(z_k) \right)
\\
\nonumber
&{}& \; =\PP\left(x: x_0=\Gamma(z_k), \ldots, x_{k} = \Gamma(z_0) \right)
=\PP\left(x: x_0=(\og \circ \I(z))_{-k}, \ldots ,x_{k}
= (\og \circ\I(z))_{0}\right)\\
&{}& \; =\PP\left(x: x_0=(T^{-k}(\og\circ \I(z)))_{0},\ldots,
x_{k} =(T^{-k}(\og \circ \I(z)))_{k}\right)\,.
\end{eqnarray*}

On the other hand, from the equality $T^{i-k} (\og \circ \I (z))= \og \circ 
\I(T^{k-i} (z))$ and using the fact that~$\Psi$ is $\og \circ \I$-invariant, we 
obtain
$$
\sum_{i=0}^{k-1} \Psi(T^iT^{-k}(\og \circ \I (T^{-1}z)))=
\sum_{i=0}^{k-1} \Psi(\og \circ \I( T^{k-i-1}z))=
\sum_{i=0}^{k-1} \Psi( T^{k-i-1}z)=\sum_{i=0}^{k-1} \Psi( T^{i}(z))\,.
$$
Hence
\begin{equation}
\label{eq334}
\frac{{\widetilde {\PP}}\left(x: x_0 = z_0, \ldots, x_k = z_k \right)}
{e^{-pk + \sum_{i=0}^{k-1} \Psi(T^i (z)}}=
\frac{\PP\left(x: x_0=(T^{-k}(\og \circ \I(z)))_{0},\ldots, x_{k}
=(T^{-k}(\og \circ\I(z)))_{k}\right)}
{e^{-pk + \sum_{i=0}^{k-1}\Psi(T^i T^{-k}(\og \circ \I (T^{-1}z)))}}\;.
\end{equation}
We note that
$$
\forall z\in {\cal X}_\Xi\, \forall \, k\geq 0\,: \;\;
{\tilde c}_1 \leq \frac{e^{-pk + \sum_{i=0}^{k-1} \Psi(T^i T^{-1}(z))}}
{e^{-pk + \sum_{i=0}^{k-1} \Psi(T^i (z))}}
\leq {\tilde c}_2\,,
$$
with ${\tilde c}_1=e^{\min \Psi-\max \Psi}$ and ${\tilde c}_2=e^{\max \Psi-\min 
\Psi}$.  Then,
\begin{equation}
\label{eq555}
{\tilde c}_1 \leq
\frac{e^{-pk + \sum_{i=0}^{k-1}  \Psi(T^iT^{-k}(\og \circ \I (T^{-1}z)))}}
{e^{-pk + \sum_{i=0}^{k-1}  \Psi(T^iT^{-k}(\og \circ \I (z)))}}
\leq {\tilde c}_2\,.
\end{equation}
Hence from (\ref{eq334}), (\ref{bowen11}) and (\ref{eq555}), we deduce
that Claim~$2$ holds,
$$
\forall z\in {\cal X}_\Xi\, \forall \, k\geq 0\,: \;\;
c_1{\tilde c}_1 \leq \frac{{\widetilde {\PP}}\left(x: x_0 = z_0, \ldots , x_k = z_k \right)}
{e^{-pk + \sum_{i=0}^{k-1} \Psi(T^i z)}} \leq c_2{\tilde c}_2\,.
$$
Hence, $\tildePP=\PP$ and the proof is complete.
\end{proof}

\section{CSPR for dinucleotides}
\label{sec:din}

\subsection{A $5$-dimensional characterisation of CSPR}

Henceforth, we shall focus on the dinucleotide distributions under CSPR.  
Let $\PP$ be a translation invariant distribution on $\A^\ZZ$.  
As stated, CSPR means that for all $R\geq 1$, we have
\begin{equation}
\label{secondrule1}
\forall \, (a_0,\ldots,a_{R-1})\in \A^R\,:
\;\;\;\, \PP(x: x_0=a_0,\ldots,x_{R-1}=a_{R-1})=
\PP(x: x_0=\Gamma(a_{R-1}),\ldots,x_{R-1}=\Gamma(a_0))\,.
\end{equation}
If the set of equalities (\ref{secondrule1}) holds for some
$R=R_0$,  we say that CSPR holds for $R_0$. In this case,
by taking appropriate marginals,
the equalities also hold for all positive integers $R\leq R_0$.

Now, for discussing CSPR for $R=2$,  it is convenient to introduce the following 
notation. Let $[ab]_k$ be the event $\left\{x: x_k = a,\, x_{k+1} = b \right\}$. 
Since $\PP$ is translation invariant, we have $\PP([ab]_k)=\PP([ab]_0)$ for all 
$k\in\ZZ$ and $a,b\in\A$.
Therefore, CSPR for $R=2$ reduces to
\begin{equation}
\label{eqn:chargaff.r2}
\forall \, a,b\in \A\,: \quad \PP([ab]_0) = \PP([\Gamma(b)\Gamma(a)]_0).
\end{equation}
This equality implies CSPR for $R=1$:  $\PP([a]_0) = \PP([\Gamma(a)]_0)$ for $a\in\A$, 
where $[a]_0=\{x: x_0=a\}$.

We want to test the hypothesis $H_0$: CSPR holds for $R=2$. In order to 
construct such a test, it is useful to introduce the following quantities:
\begin{equation}
\label{def.f}
f=(f(a,b): (a,b)\in \A^2) \; \hbox{ where } \;
f(a,b):=\PP([ab]_0)-\PP([\Gamma(b)\Gamma(a)]_0)\,.
\end{equation}
From (\ref{eqn:chargaff.r2}), 
CSPR for $R=2$ is satisfied if and only if $f=0$.

We remark that  $4$ of the above $16$ quantities $f(a,b)$ vanish. More 
precisely, whenever $(a,b)=(c,\Gamma(c))$ for some $c\in \A$, we see that 
$f(a,b)= 0$. Moreover, among the remaining $12$ terms, only $5$ are meaningful 
since $f(a,b)=- f(\Gamma(b),\Gamma(a))$ for any $a,b\in \A$, and 
$\sum_{c\in\A}f(a,c)=\sum_{c\in\A}f(c,a)$ for all $a\in\A$.
In the following, we fix an index set $\K=\{(A,A),(A,C),(A,G),(C,A),(C,C)\}$ for 
$5$ of these values and gather them together into a vector $f^\K 
:=\left(f(a,b):\ (a,b)\in \K\right)$. Using this alternative representation, the 
null hypothesis $H_0$ is satisfied if and only if $f^\K=0$.

\subsection{Covariances and the Central Limit Theorem}

Here, we present some results that are essential for developing an asymptotic 
test for the hypothesis $H_0$: CSPR holds for $R=2$, in the setting of Gibbs 
distributions.
 
 Let $\PP=\PP_\Psi$ be Gibbsian for some $\Psi \in F_{\theta}$, with $\theta\in 
 (0,1)$ fixed. We begin by giving a simple computation. Let $\EE=\EE_\Psi$ 
 denote the expectation operator associated with $\PP_\Psi$. A function $g\in 
 F_\theta$ is said to be of zero mean if $\EE(g)=0$.

In this section we assume $\varphi^1, \ldots, \varphi^l$ are zero mean functions in
$F_{\theta}$ and set $\varphi=(\varphi^1, \ldots, \varphi^l)$.  We shall consider
$X_i^k := \varphi^k \circ T^i$ for $i\geq 0$ and $k=1,\ldots, l$, and define for $n\geq1$,
\begin{equation}
\label{defS12}
S_n^k := \frac1{\sqrt n} \sum_{i=0}^{n-1} X_i^k\,.
\end{equation}

\begin{proposition}
\label{covsimp1}
The limits
\begin{eqnarray}
\nonumber
&{}& \Sigma^\varphi(k,j)=\lim\limits_{n\to \infty}\EE_\Psi(S_n^k \, S_n^j)
\,\mbox{ exist for all } k,j\in \{1,\ldots,l\} \,\mbox{ and }\\
\label{covexp2}
&{}& \Sigma^\varphi(k,j)=\EE_\Psi \left(X_0^k \, X_0^j \right)+\sum_{i=1}^{\infty}
\EE_\Psi \left( X_0^k \, X_i^j \right)
		+\sum_{i=1}^{\infty} \EE_\Psi \left( X_0^j \,  X_i^k \right)\,.
\end{eqnarray}
The matrix $\Sigma^\varphi=\left(\Sigma^\varphi(k,j):
k,j\in \{1,\ldots,l\}\right)$
is symmetric and semi-positive definite.

Moreover the convergence of the two summations on the right-hand side 
of~(\ref{covexp2}) occurs at a geometric rate, more precisely, 
\begin{equation}
\label{eqn:geom.rate}
\exists \, \bar\delta<\infty, \, \xi\in (0,1)\,,\; \forall k,j=1,\ldots,l\,,
\, \forall i\geq 1\,:\;\;\; 
\abs{\EE_\Psi(X_0^k X_i^j)} \leq \bar\delta\xi^i \,.
\end{equation}
\end{proposition}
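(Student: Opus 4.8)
The plan is to establish the existence of the limits, the closed form, and the geometric rate all at once by reducing everything to a single scalar estimate on correlation coefficients of the form $\EE_\Psi(\varphi^k \cdot (\varphi^j\circ T^i))$. The starting observation is that $\EE_\Psi(S_n^k S_n^j)$ expands as a double sum $\frac1n\sum_{a,b=0}^{n-1}\EE_\Psi(X_a^k X_b^j)$, and by $T$-invariance of $\PP_\Psi$ the summand depends only on $i=|a-b|$: when $a\le b$ it equals $\EE_\Psi(X_0^k X_{b-a}^j)$, and when $a>b$ it equals $\EE_\Psi(X_0^j X_{a-b}^k)$. Counting diagonals, this gives
\[
\EE_\Psi(S_n^k S_n^j)=\EE_\Psi(X_0^k X_0^j)+\sum_{i=1}^{n-1}\Bigl(1-\tfrac in\Bigr)\Bigl(\EE_\Psi(X_0^k X_i^j)+\EE_\Psi(X_0^j X_i^k)\Bigr).
\]
So once I have the bound (\ref{eqn:geom.rate}), dominated convergence over the index $i$ (the tail $\sum_i \bar\delta\xi^i$ is summable, uniformly in $n$) lets me pass to the limit and kills the $i/n$ factor, yielding exactly (\ref{covexp2}) and the existence of $\Sigma^\varphi(k,j)$.

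Hence the crux is (\ref{eqn:geom.rate}): an exponential decay of correlations for the Gibbs measure $\PP_\Psi$ when tested against zero-mean H\"older observables. This is the standard consequence of the spectral gap of the (normalised) Ruelle--Perron--Frobenius transfer operator $\mathcal L_\Psi$ acting on $F_\theta$ over the aperiodic subshift of finite type ${\cal X}_\Xi$ --- precisely the material underlying Theorem~1.2 of \cite{bowen2008}, which is already invoked for (\ref{bowen11}). Concretely, after normalising $\Psi$ so that the leading eigenvalue is $1$ with eigenfunction $h>0$ and left eigenmeasure $\nu$ (so $d\PP_\Psi = h\,d\nu$), one has the quasi-compactness statement
\[
\exists\, C<\infty,\ \xi\in(0,1)\,:\qquad \bigl\|\mathcal L_\Psi^i g - \bigl(\textstyle\int g\, d\PP_\Psi\bigr)\,\mathbf 1\bigr\|_\theta \le C\,\xi^i\,\|g\|_\theta\,,
\]
and writing $\EE_\Psi(\varphi^k\cdot(\varphi^j\circ T^i)) = \int \mathcal L_\Psi^i\bigl(\varphi^k\bigr)\cdot\varphi^j\, d\PP_\Psi$ (using that $\mathcal L_\Psi$ is the dual of composition with $T$ for the normalised system), the zero-mean hypothesis $\int\varphi^k d\PP_\Psi=0$ makes the main term vanish, leaving $\bigl|\EE_\Psi(X_0^k X_i^j)\bigr|\le C\xi^i\,\|\varphi^k\|_\theta\,\|\varphi^j\|_\infty$. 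Taking $\bar\delta := C\max_k\|\varphi^k\|_\theta \cdot \max_j\|\varphi^j\|_\infty$, which is finite since $l$ is finite and each $\varphi^k\in F_\theta$, gives (\ref{eqn:geom.rate}); the same bound with the roles of $k,j$ swapped handles the second summation. I would state this decay-of-correlations fact as a cited lemma rather than reprove it.

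It remains to check that $\Sigma^\varphi$ is symmetric and positive semi-definite. Symmetry is immediate from the closed form (\ref{covexp2}): swapping $k\leftrightarrow j$ exchanges the two infinite sums and fixes the first term. For semi-positive definiteness, fix $c=(c_1,\dots,c_l)\in\RR^l$ and set $\varphi^c := \sum_k c_k\varphi^k$, a zero-mean element of $F_\theta$. Then $S_n^c := \frac1{\sqrt n}\sum_{i=0}^{n-1}\varphi^c\circ T^i = \sum_k c_k S_n^k$, so $\EE_\Psi\bigl((S_n^c)^2\bigr)=\sum_{k,j}c_k c_j\,\EE_\Psi(S_n^k S_n^j)\ge 0$ for every $n$; letting $n\to\infty$ gives $\sum_{k,j}c_k c_j\,\Sigma^\varphi(k,j)=\lim_n\EE_\Psi((S_n^c)^2)\ge0$. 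The main obstacle is genuinely just the decay-of-correlations input (\ref{eqn:geom.rate}); everything else is bookkeeping with $T$-invariance and a limit exchange justified by geometric summability. One small point to be careful about is that $\varphi^j\circ T^i$ need not lie in $F_\theta$ with norm controlled uniformly in $i$ --- but we only need its sup norm, and $\|\varphi^j\circ T^i\|_\infty=\|\varphi^j\|_\infty$, so pairing $\mathcal L_\Psi^i(\varphi^k)$ (which is $\theta$-H\"older-controlled) against $\varphi^j$ in $L^\infty$ is exactly the right bookkeeping.
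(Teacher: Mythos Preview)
Your proof is correct and follows essentially the same route as the paper: the same $T$-invariance expansion into $(1-i/n)$-weighted sums, the same exponential decay of correlations (which the paper cites directly as the exponential cluster property, Property~1.26 in \cite{bowen2008}, rather than sketching its transfer-operator origin), and the same passage to the limit (the paper phrases it via Cesaro means of $i\,\EE_\Psi(X_0^k X_i^j)$ where you use dominated convergence). Semi-positive definiteness is handled identically in substance, as a limit of positive semi-definite covariance matrices.
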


\begin{proof}
By expanding the terms in the sum and
using the translation invariance property
$\EE(X_i^k \, X_r^j)=\EE(X_0^k \, X_{r-i}^j)$
for all $k,j\in \{1,\ldots,l\}$ and $i<r$, we get
\begin{eqnarray*}
\EE(S_n^k S_n^j) &=&
\frac1n \sum_{i=0}^{n-1} \EE \left( X_i^k \, X_i^j \right)+
\frac1n \sum_{i=1}^{n-1} \sum_{r=0}^{i-1} \EE \left( X_i^k \, X_r^j \right)
+\frac1n \sum_{i=1}^{n-1} \sum_{r=0}^{i-1} \EE \left( X_i^j \, X_r^k \right)
\\
&=&
 \EE \left( X_0^k \, X_0^j \right)+\frac1n
\sum_{i=1}^{n-1} (n-i)  \EE \left( X_0^k \, X_i^j \right)
+\frac1n \sum_{i=1}^{n-1} (n-i)  \EE \left( X_0^j \, X_i^k \right)\,.
\end{eqnarray*}
Since $\varphi^k \in F_\theta$ for each $k$, the exponential cluster
property of Gibbs measures (see Property~1.26 on Page~23 in~\cite{bowen2008})
guarantees the existence of $\delta<\infty$, and $\xi \in (0,1)$ only depending on $\theta$
and $\Psi$, such that for all $k,j\in \{1,\ldots,l\}$ ,
$$
\abs{\EE ( X_0^k \, X_i^j )}
= \abs{\EE \bigl(\varphi^k \, \cdot \, (\varphi^j \circ T^i) \bigr)} 
\leq \delta \|\varphi^k\|_\theta \|\varphi^j\|_\theta \xi^i\,.
$$
As a consequence, (\ref{eqn:geom.rate}) is satisfied. 
Hence all the series are absolutely convergent. Moreover, since
$\sum_{i=0}^\infty i\EE(X_0^k X_i^j)$ is finite, the Cesaro mean
of $i\EE(X_0^k X_i^j)$ converges to zero and we obtain the formula
$$
\lim_{n \to \infty}\EE(S_n^k \, S_n^j) = \EE \left(X_0^k\,X_0^j \right)
+ \sum_{i=1}^{\infty} \EE \left(X_0^k \,  X_i^j \right)+
\sum_{i=1}^{\infty} \EE \left(X_0^j \,  X_i^k \right) = \Sigma_{kj}^\varphi\,.
$$
Finally, we see from this explicit expression that the matrix $\Sigma^\varphi$ 
is symmetric and semi-positive definite because each matrix $\left(\EE(S_n^k 
S_n^j): k,j\in \{1,\ldots,l\}\right)$ is a  covariance matrix. Hence, its limit 
$\Sigma^\varphi$ is also semi-positive definite.
\end{proof}

Next, we show a Central Limit Theorem for random vectors
in the Gibbs framework, which is a corollary of the Central Limit Theorem
given in \cite{Coelho1990}.

\begin{proposition}
\label{prop:vector_clt}
If $\Sigma^\varphi=(\Sigma^\varphi(k,j): k,j=1,\ldots, l)$ given by (\ref{covexp2}) is
positive definite then the vector process $Z_n := (S_n^1, \ldots, S_n^l)$
(where $S_n^k$ is given in (\ref{defS12})) converges in
distribution to the multivariate normal vector
$\N(0,\Sigma^\varphi)$.
\end{proposition}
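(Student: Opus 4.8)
The plan is to deduce the multivariate statement from the one‑dimensional Central Limit Theorem for Gibbs measures in \cite{Coelho1990} by the Cram\'er--Wold device. Fix an arbitrary vector $t=(t_1,\ldots,t_l)\in\RR^l$ and consider the single zero‑mean H\"older function $\varphi_t:=\sum_{k=1}^l t_k\varphi^k\in F_\theta$ (it lies in $F_\theta$ because $F_\theta$ is a linear space and each $\varphi^k$ does, and it has zero mean by linearity of $\EE_\Psi$). Writing $X_i^t:=\varphi_t\circ T^i$ and $S_n^t:=\frac1{\sqrt n}\sum_{i=0}^{n-1}X_i^t$, one has the exact identity $S_n^t=\sum_{k=1}^l t_kS_n^k=\langle t,Z_n\rangle$. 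The one‑dimensional CLT of \cite{Coelho1990}, applied to the zero‑mean function $\varphi_t$, states that $S_n^t$ converges in distribution to a centred Gaussian with variance $\sigma_t^2:=\lim_{n\to\infty}\EE_\Psi\big((S_n^t)^2\big)$, provided that limit exists; its existence (and an explicit series formula) is exactly what Proposition~\ref{covsimp1} gives when applied to the single function $\varphi_t$.

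The second step is to identify $\sigma_t^2$ with the quadratic form $t^\top\Sigma^\varphi t$. Expanding bilinearly,
\begin{equation*}
\EE_\Psi\big((S_n^t)^2\big)=\sum_{k,j=1}^l t_kt_j\,\EE_\Psi(S_n^k S_n^j)\,,
\end{equation*}
and letting $n\to\infty$, each term $\EE_\Psi(S_n^k S_n^j)$ converges to $\Sigma^\varphi(k,j)$ by Proposition~\ref{covsimp1}, so $\sigma_t^2=\sum_{k,j}t_kt_j\Sigma^\varphi(k,j)=t^\top\Sigma^\varphi t$. Hence $\langle t,Z_n\rangle\indistributionto{n}\N(0,t^\top\Sigma^\varphi t)$, which is precisely the law of $\langle t,W\rangle$ for $W\sim\N(0,\Sigma^\varphi)$. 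Since $t$ was arbitrary, the Cram\'er--Wold theorem yields $Z_n\indistributionto{n}\N(0,\Sigma^\varphi)$.

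A point deserving a little care, rather than a genuine obstacle, is the non‑degeneracy hypothesis needed to invoke \cite{Coelho1990}: for a generic $t$ the variance $\sigma_t^2=t^\top\Sigma^\varphi t$ is strictly positive because $\Sigma^\varphi$ is assumed positive definite, and then the cited CLT applies directly and the limit is a genuine (non‑degenerate) Gaussian. One should only check that the case $t=0$, where $\langle t,Z_n\rangle\equiv0$ converges trivially to the Dirac mass at $0=\N(0,0)$, is handled by convention; for $t\neq 0$ positive definiteness forces $\sigma_t^2>0$, so no degenerate‑limit version of the scalar CLT is required. (If one prefers to avoid even this remark, it suffices to note that weak convergence of $\langle t,Z_n\rangle$ for all $t$ with $\sigma_t^2>0$, together with tightness coming from $\sup_n\EE_\Psi(\|Z_n\|^2)=\mathrm{tr}\,\Sigma^\varphi+o(1)<\infty$, already pins down the limit.) The only ingredients used are the linear structure of $F_\theta$, Proposition~\ref{covsimp1}, the scalar Gibbsian CLT of \cite{Coelho1990}, and the Cram\'er--Wold device, so the argument is short.
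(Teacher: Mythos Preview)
Your proof is correct and follows essentially the same route as the paper: reduce to the scalar CLT of \cite{Coelho1990} via linear combinations $\varphi_t=\sum_k t_k\varphi^k\in F_\theta$, identify the limiting variance as $t^\top\Sigma^\varphi t$ using Proposition~\ref{covsimp1}, and conclude by Cram\'er--Wold (the paper phrases this last step through characteristic functions and L\'evy's continuity theorem, which amounts to the same thing). Your handling of the degenerate case $t=0$ is slightly more explicit than the paper's, but otherwise the arguments coincide.
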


\begin{proof}
We recall that the Central Limit Theorem shown in \cite{Coelho1990}
says that if a function
$g \in F_{\theta}$ is of zero mean and
$$
\sigma(g)^2 \neq 0  \mbox{ where }  \sigma(g)^2:=\lim_{n \to \infty} \frac1n \;
\EE\left( \left( \sum_{i = 0}^{n-1} g \circ T^i  \right)^2 \right)\,,
$$
then
\begin{equation}
\label{coeparr}
\frac1{\sqrt n}\left(\sum_{i=0}^{n-1} g \circ T^i\right)
\indistributionto{n} \N(0, \sigma(g)^2)\,.
\end{equation}

Since $F_{\theta}$ is Banach, for all $\alpha = (\alpha_1, \ldots, \alpha_l)$
the function $\varphi_\alpha = \sum_{k=1}^l \alpha_k \varphi^k$ is in $F_{\theta}$.
Since $\varphi_\alpha$ has zero mean and $\sum_{i=0}^{n-1}(\varphi_\alpha\circ T^i) / \sqrt n =\alpha' Z_n$,
where $a'$ denotes the transpose of a vector~$a$, the Central Limit Theorem (\ref{coeparr}) gives
\begin{equation}
\label{convalpha}
\alpha' Z_n \indistributionto{n} \N(0, \sigma_{\alpha}^2),
\end{equation}
where
$$
\sigma_\alpha^2 =  \lim_{n \to \infty} \frac1n
 \EE \left( \left( \sum_{i = 0}^{n-1} \varphi_\alpha \circ T^i  \right)^2 \right) \,,
$$
provided that $\sigma_\alpha^2\neq0$.  Now, $\sigma_\alpha^2\neq0$ will be obtained as a consequence of the fact that
\begin{equation}
\label{eqshape}
\sigma_\alpha^2 =\alpha' \Sigma^\varphi \alpha =
\sum_{k = 1}^l \sum_{j = 1}^l \alpha_k \alpha_j \Sigma^\varphi(k,j)\,.
\end{equation}
This together with the assumption that $\Sigma^\varphi$ is positive
definite allows us to determine that $\sigma_\alpha^2=\alpha' \Sigma^\varphi \alpha > 0$
for all $\alpha\neq 0$.  Furthermore, (\ref{convalpha}) and (\ref{eqshape}) yield
$$
\forall s \in \RR\,:\;\;\;
\lim_{n \rightarrow \infty} \EE(e^{i s \alpha' Z_n})
=  e^{- \frac12 s^2 \sigma_\alpha^2 } = e^{-\frac12 s^2
\alpha' \Sigma^\varphi \alpha}\,,
$$
which is the characteristic function of a $\N(0, \alpha' \Sigma^\varphi \alpha)$
random vector. Convergence of $Z_n$ in
distribution to an $\N(0, \Sigma^\varphi)$ random vector 
then follows from L\'evy's continuity theorem.

It only remains to prove (\ref{eqshape}). Notice that
$$
\sum_{i = 0}^{n-1} \varphi_\alpha \circ T^i
   = \sum_{k = 1}^l \alpha_k \sum_{i = 0}^{n-1} \varphi^k \circ T^i
   =  \sum_{k = 1}^l \alpha_k \sum_{i = 0}^{n-1} X_n^k
   = \sqrt n \sum_{k = 1}^l \alpha_k S_n^k
   $$
which implies that
$$
\sigma_\alpha^2 = \lim_{n \rightarrow \infty}
 \EE\left( \left( \sum_{k = 1}^l \alpha_k S_n^k \right)^2 \right)
= \sum_{k = 1}^l \sum_{j=1}^l
\alpha_k \alpha_j \lim_{n \to \infty} \EE(S_n^k S_n^j)\,.
$$
Finally, Proposition~\ref{covsimp1} asserts that $\lim_{n \to \infty} \EE(S_n^k S_n^j) 
= \Sigma^\varphi(k,j)$ and hence the result follows.
\end{proof}

\section{Testing under the Gibbsian assumption}
\label{sec:testgibbs}

\subsection{A statistical test}

Recall that the hypothesis $H_0$: CSPR for $R=2$, is equivalent to $f=0$, 
where~$f$ was defined in~(\ref{def.f}).  
Let us introduce estimators of the various quantities involved in testing this. For any finite observed sequence
$X=(X_0,\ldots,X_{n-1})$, let
\begin{equation*}
\hatf_n(a,b) := \frac{N_n(a,b)}{n} -\frac{N_n(\Gamma(b),\Gamma(a))}{n},
\end{equation*}
where
$$
N_n(a,b) := \#\{k\in \{0,\ldots,n-1\}: \, (X_k,X_{k+1})=(a,b)\}
$$
counts the number of occurrences of the pattern $ab$ in the sequence.  
Note that we treat the sequence $X$ as though it
were circular with $X_{n-1}$ connected to $X_0$, so that $X_n\equiv X_0$.

We shall show that one appropriate statistic  for assessing this test is
$$
\hateta_n =n \, \hatf^\K_n\,{}'\, \hatV_n ^{-1} \,\hatf^\K_n \,,
$$
where $\hatf_n^\K =\left(\hatf_n(a,b):\ (a,b)\in \K\right)$ is a consistent unbiased
estimator of $f^\K$ and $\hatV_n$ is a consistent biased estimator of the
asymptotic covariance matrix~$V$ of $\sqrt n\hatf_n^\K$ which we shall define shortly.  
Furthermore, we shall prove
that $\hateta_n$ converges asymptotically
in distribution to a $\chi_5^2$ random variable.  Then, sufficiently large
values of $\hat\eta_n$ will identify sequences that fail to comply with CSPR for $R=2$.

More precisely, the test is set up as follows:
\begin{equation*}
  \text{Reject } H_0 \text{ if } \hateta_n \geq s,
\end{equation*}
where $s$ is some threshold to be chosen. If~$\alpha$ is the type~I error 
desired for the test (for instance $\alpha =0.05$ or $0.01$), then we require 
that
$$
\PP_{H_0}(\text{reject } H_0) =\PP_{H_0}(\hateta_n \geq s) \leq \alpha,
$$
either exactly or asymptotically.  Doing this exactly is not feasible in the
current setting, but $\hat\eta_n\indistributionto{n} \chi_5^2$, where $\indistributionto{n}$ denotes convergence in distribution.  Thus the
threshold~$s$ can be fixed asymptotically by appealing to the $\chi^2$
distribution on $5$ degrees of freedom.  We merely have to set~$s$ to the $1- \alpha$
quantile $\chi_{5,1- \alpha}^2$ of the $\chi_5^2$ distribution. 

\subsection{Asymptotics of the test statistic}

In order to construct this asymptotic test, we make the further assumption that 
the distribution $\PP=\PP_\Psi$ is Gibbsian for some energy $\Psi \in 
F_{\theta}$, where $\theta\in (0,1)$.  Recall that $\PP$ is ergodic. Let~$\EE$ 
denote the mean expected value operator associated with~$\PP$.

 Firstly,
 $$
 \EE(\hatf_n^\K)
 = \EE\left( \frac{N_n(a,b)}n - \frac{N_n(\Gamma(b),\Gamma(a))}n\right)
 = \frac1n \left(n\PP([ab]_0)-n\PP([\Gamma(b)\Gamma(a)]_0)\right)
 = f(a,b).
$$
From ergodicity the law of large numbers holds and so
$$
\lim\limits_{n\to \infty} \frac{N_n(a,b)}n =\PP([ab]_0)\;\;\, \PP-\hbox{a.e.}
\mbox{ and hence } \lim\limits_{n\to \infty}\hatf_n^\K =f^\K \,\;\; \PP-\hbox{a.e.}
$$
Therefore $\hatf_n^\K$ is a consistent, unbiased estimator of $f^\K$.

Next define
$$
\varphi=(\varphi^{a,b}: (a,b)\in \A^2) \hbox{ where }
\varphi^{a,b}:=\ind_{[ab]_0} - \PP([ab]_0) \,,
$$
where, as usual, $\ind_B$ is the characteristic function of the set $B$.
Observe that for all $(a,b)\in \A^2$ we have  $\varphi^{a,b}\in F_\theta$.
For $i\geq 0$ and $n\geq1$, define
$$
\forall \, (a,b) \in \A^2\,:\;\; X_i^{a,b}=\varphi^{a,b}\circ T^i\,
\hbox{ and } \,
S_n^{a,b}=\frac{1}{\sqrt n} \sum_{i = 0}^{n-1} X_i^{a,b} \,.
$$
A simple calculation that takes advantage of the $T$-invariance of~$\PP$ gives
$$
X_i^{a,b}=(\ind_{[ab]_i} - \PP([ab]_0)) \, \hbox{ and }
S_n^{a,b}=\frac{1}{\sqrt n}\left(N_n(a,b) - n\PP([ab]_0)\right)\,.
$$
A straight forward application of Proposition~\ref{covsimp1} can be used to show 
existence of the matrix $\Sigma^\varphi=(\Sigma^\varphi(a,b;c,d): (a,b), 
(c,d)\in\A^2)$, whose elements are defined by
$$
\Sigma^\varphi(a,b; c,d):=
\lim_{n\to \infty}\EE(S_n^{a,b} \, S_n^{c,d})\,.
$$
(Note that for simplicity we write $\Sigma^\varphi(a,b;c,d)$ rather than 
$\Sigma^\varphi((a,b),(c,d))$). Furthermore, using (\ref{covexp2}) from the same 
lemma, we can see that
$$
\Sigma^\varphi(a,b;c,d)=\EE\left(X_0^{a,b} \, X_0^{c,d} \right)+
\sum_{i=1}^{\infty} \EE\left( X_0^{a,b} \, X_i^{c,d} \right)
+\sum_{i=1}^{\infty} \EE\left( X_0^{c,d} \,  X_i^{a,b} \right)\,,
$$
and that $\Sigma^\varphi$ is symmetric and semi-positive definite.

Further simple  computations enable us to write the elements of $\Sigma^\varphi$ 
explicitly as
\begin{eqnarray*}
\Sigma^\varphi(a,b;c,d)
&=& \PP([ab]_0\cap[cd]_0) - \PP([ab]_0)\PP([cd]_0)
 + \sum_{k=1}^{\infty} \left[ \PP([ab]_0 \cap [cd]_{k}) - \PP([ab]_0)
\PP([cd]_0)  \right] \\
&{}& \, +\sum_{k = 1}^{\infty} \left[\PP([cd]_0 \cap [ab]_{k})-\PP([ab]_0)
\PP([cd]_0) \right]\,.
\end{eqnarray*}

As a corollary to Proposition~\ref{prop:vector_clt}, we obtain:

\begin{proposition}
Assume $\Sigma^\varphi$ is positive definite. Then, the joint distribution of the
counts $N_n(a,b)$ asymptotically satisfy
$$
\left(\frac{N_n(a,b) - n\PP([ab]_0)}{\sqrt n}: (a,b) \in \A^2\right)
\indistributionto{n} \N(0, \Sigma^\varphi)\,.
$$
\end{proposition}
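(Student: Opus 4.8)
The plan is to recognize this proposition as a direct translation of Proposition~\ref{prop:vector_clt} once we identify the right zero-mean functions and verify the positive-definiteness hypothesis is exactly what we are assuming. Concretely, set $l = |\A^2| = 16$ and take the family $\varphi = (\varphi^{a,b}: (a,b)\in\A^2)$ with $\varphi^{a,b} = \ind_{[ab]_0} - \PP([ab]_0)$ already introduced above. We have already observed that each $\varphi^{a,b}\in F_\theta$ (indicators of cylinders are locally constant, hence H\"older for any $\theta$), that $\EE(\varphi^{a,b}) = 0$ by construction, and that $X_i^{a,b} = \varphi^{a,b}\circ T^i = \ind_{[ab]_i} - \PP([ab]_0)$, so that
$$
S_n^{a,b} = \frac1{\sqrt n}\sum_{i=0}^{n-1} X_i^{a,b} = \frac{N_n(a,b) - n\PP([ab]_0)}{\sqrt n}\,.
$$
Thus the vector $\left(S_n^{a,b}: (a,b)\in\A^2\right)$ is precisely the vector process $Z_n = (S_n^1,\ldots,S_n^l)$ appearing in Proposition~\ref{prop:vector_clt}, after fixing any enumeration of $\A^2$.

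First I would invoke Proposition~\ref{covsimp1} to guarantee that the limiting covariance matrix $\Sigma^\varphi(a,b;c,d) = \lim_{n\to\infty}\EE(S_n^{a,b}S_n^{c,d})$ exists and equals the series expression displayed just before the proposition; this has in fact already been carried out in the paragraphs preceding the statement, so here one merely cites it. Then I would note that the hypothesis of the proposition is exactly the hypothesis ``$\Sigma^\varphi$ is positive definite'' required by Proposition~\ref{prop:vector_clt}. Applying that proposition verbatim yields $Z_n \indistributionto{n} \N(0,\Sigma^\varphi)$, which is the claimed convergence once we rewrite $Z_n$ back in terms of the counts $N_n(a,b)$. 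There is one small bookkeeping point: the counts $N_n(a,b)$ here are defined with the circular convention $X_n\equiv X_0$, whereas the functions $X_i^{a,b}$, $i = 0,\ldots,n-1$, use $T^i$ on the bi-infinite sequence; but $\sum_{i=0}^{n-1}\ind_{[ab]_i}$ is exactly the circular count, so the identification is exact with no boundary correction needed.

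The proof is therefore essentially a two-line reduction, and the only genuine content lies in the hypothesis: the statement is \emph{conditional} on $\Sigma^\varphi$ being positive definite, so there is no obstacle to overcome inside this proof — the burden of checking positive-definiteness (or of handling the degenerate directions, e.g.\ the deterministic linear constraint $\sum_{a,b} N_n(a,b) = n$ which forces $\sum_{a,b}\varphi^{a,b} \equiv 0$ and hence makes the \emph{full} $16\times 16$ matrix singular) is deferred. The natural way to write it is: ``\emph{Proof.} Apply Proposition~\ref{prop:vector_clt} to the zero-mean functions $\varphi^{a,b}\in F_\theta$, $(a,b)\in\A^2$, noting that the associated normalized sums $S_n^{a,b}$ coincide with $(N_n(a,b)-n\PP([ab]_0))/\sqrt n$ and that the hypothesis $\Sigma^\varphi$ positive definite is precisely what is assumed. $\Box$'' If one wanted to be slightly more careful, the single thing worth spelling out is why the indicator functions lie in $F_\theta$ — because $[ab]_0$ is a cylinder, $\ind_{[ab]_0}$ depends only on coordinates $x_0,x_1$, so $|\ind_{[ab]_0}(x) - \ind_{[ab]_0}(z)| = 0$ whenever $K(x,z)\geq 1$, i.e.\ whenever $\Delta_\theta(x,z)\leq\theta$, giving $|\varphi^{a,b}|_\theta \leq \theta^{-1} < \infty$.
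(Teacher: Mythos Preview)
Your proposal is correct and matches the paper exactly: the paper presents this proposition simply as ``a corollary to Proposition~\ref{prop:vector_clt}'' with no further proof, relying on precisely the identifications $\varphi^{a,b}=\ind_{[ab]_0}-\PP([ab]_0)\in F_\theta$ and $S_n^{a,b}=(N_n(a,b)-n\PP([ab]_0))/\sqrt n$ that you spell out. One tiny correction to your bookkeeping remark: the circular count and $\sum_{i=0}^{n-1}\ind_{[ab]_i}$ are \emph{not} literally identical---at $i=n-1$ the former pairs $X_{n-1}$ with $X_0$ while the latter pairs $x_{n-1}$ with $x_n$---but the discrepancy is at most $1$, hence $O(1/\sqrt n)$ after normalization and irrelevant for the limit.
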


The following is then obtained by taking appropriate marginals in the preceding result.

\begin{corollary}
Assume $\Sigma^\varphi$ is positive definite.  We have
$$
\sqrt n(\hatf_n^\K -f^\K) \indistributionto{n} \N(0, V),
$$
where the covariance matrix $V=\left(V(a,b;c,d):
(a,b),(c,d)\in\K\right)$ is given by
$$
V(a,b;c,d)=
\Sigma^\varphi(a,b;c,d) +
\Sigma^\varphi(\Gamma(b), \Gamma(a); \Gamma(d), \Gamma(c)) -
\Sigma^\varphi(\Gamma(b), \Gamma(a); c,d) -
\Sigma^\varphi(a,b; \Gamma(d),\Gamma(c))\,.
$$
\end{corollary}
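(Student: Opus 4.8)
The plan is to apply the preceding proposition (the multivariate CLT for the counts $N_n(a,b)$) together with the linear map that sends the vector of recentred counts to $\sqrt n\,\hatf_n^\K$, and then read off the covariance matrix by a standard propagation-of-covariance computation. First I would observe that, by definition,
$$
\sqrt n\bigl(\hatf_n(a,b)-f(a,b)\bigr)
= \frac{N_n(a,b)-n\PP([ab]_0)}{\sqrt n}
- \frac{N_n(\Gamma(b),\Gamma(a))-n\PP([\Gamma(b)\Gamma(a)]_0)}{\sqrt n}
= S_n^{a,b}-S_n^{\Gamma(b),\Gamma(a)}\,.
$$
Thus $\sqrt n(\hatf_n^\K-f^\K)=L\,Z_n$, where $Z_n=(S_n^{a,b}:(a,b)\in\A^2)$ is the vector in the preceding proposition and $L$ is the fixed $5\times 16$ matrix whose $(a,b)$-th row has a $+1$ in the column indexed by $(a,b)$, a $-1$ in the column indexed by $(\Gamma(b),\Gamma(a))$, and zeros elsewhere (these two columns being distinct precisely because $(a,b)\in\K$ is never of the form $(c,\Gamma(c))$).

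Next I would invoke the preceding proposition, which gives $Z_n\indistributionto{n}\N(0,\Sigma^\varphi)$ under the hypothesis that $\Sigma^\varphi$ is positive definite. Since $L$ is a constant matrix, the continuous mapping theorem (equivalently, the fact that a linear image of a Gaussian vector is Gaussian, applied through L\'evy's continuity theorem exactly as in the proof of Proposition~\ref{prop:vector_clt}) yields
$$
\sqrt n(\hatf_n^\K-f^\K)=L\,Z_n \indistributionto{n} \N\bigl(0,\,L\,\Sigma^\varphi L'\bigr)\,.
$$
It then remains only to identify $V:=L\,\Sigma^\varphi L'$ with the claimed expression. Writing out the $(a,b;c,d)$ entry of $L\,\Sigma^\varphi L'$ for $(a,b),(c,d)\in\K$ and expanding the product of the two rows of $L$ produces exactly the four terms
$$
\Sigma^\varphi(a,b;c,d)+\Sigma^\varphi(\Gamma(b),\Gamma(a);\Gamma(d),\Gamma(c))
-\Sigma^\varphi(\Gamma(b),\Gamma(a);c,d)-\Sigma^\varphi(a,b;\Gamma(d),\Gamma(c))\,,
$$
using the symmetry of $\Sigma^\varphi$ to merge the two cross terms. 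This is the asserted formula for $V(a,b;c,d)$.

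The only genuine subtlety — and the step I would treat most carefully — is bookkeeping with the index pairing $(a,b)\mapsto(\Gamma(b),\Gamma(a))$ inside $\Sigma^\varphi$: one must keep straight which slot of $\Sigma^\varphi$ receives the reversed-complemented pair, and verify that the symmetry $\Sigma^\varphi(a,b;c,d)=\Sigma^\varphi(c,d;a,b)$ is what collapses the four raw terms of $L\Sigma^\varphi L'$ into the stated four (rather than, say, eight). Everything else is a direct consequence of the previous proposition and elementary linear algebra; no further analytic input is needed, and in particular the geometric-rate bound from Proposition~\ref{covsimp1} has already done the work of guaranteeing that all the entries of $\Sigma^\varphi$, hence of $V$, are finite. \QED
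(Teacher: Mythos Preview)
Your proof is correct and follows the same route as the paper, which simply says the corollary ``is then obtained by taking appropriate marginals in the preceding result'' and later makes explicit the same linear map (there called $\Lambda$) together with the identity $V=\Lambda\Sigma^\varphi\Lambda'$. One small remark: the expansion of $L\Sigma^\varphi L'$ already produces exactly four terms from the $2\times 2$ product of the two rows of $L$, so no appeal to the symmetry of $\Sigma^\varphi$ is needed to reduce from eight terms to four.
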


From this result we conclude under the hypothesis $H_0: f^\K=0$ that
$$
\sqrt n \,\hatf_n{}^\K \indistributionto{n} \N(0,V).
$$
As a consequence, $n\hatf_n^{\K\, \prime}
V^{-1}\hatf_n{}^\K$ converges in distribution to a $\chi^2$ distribution
on~$5$ degrees of freedom, provided that $V$ is positive definite.

Observe that $\hatf_n^\K = \frac1n\lambda N_n$, where $N_n=\left(N_n(a,b): (a,b)\in\A^2\right)$ and
$\Lambda=\left(\Lambda(a,b;c,d): (a,b)\in\K, (c,d)\in\A^2\right)$ is the $5\times16$ 
matrix given by
$$
\Lambda(a,b;c,d) := \left\{\begin{array}{ll}
1, & \mbox{if } (a,b)=(c,d), \\
-1, & \mbox{if } (a, b) = (\Gamma(d),\Gamma(c)), \\
0, & \mbox{otherwise.}
\end{array}\right.
$$
The covariance matrix~$V$ may then be written as $V=\Lambda\Sigma^\varphi \Lambda'$. 
Since $\Lambda$ is of full rank, $V$ is positive definite 
whenever~$\Sigma^\varphi$ is positive definite. 

\begin{proposition}
\label{prop:testasym}
Assume that $\Sigma^\varphi$ is positive definite. Then, there exists a 
consistent estimator $\hatV_n$ of $V$
such that $\hateta_n:=n \hatf_n^\K{}' \hatV_n^{-1} \hatf_n^\K$ converges in
distribution to a $\chi_5^2$ random variable.
\end{proposition}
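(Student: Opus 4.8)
\noindent The plan is to reduce the statement to the consistent estimation of the long-run covariance matrix $\Sigma^\varphi$, after which everything follows from the preceding Corollary together with Slutsky's theorem. Recall that under $H_0$ the Corollary gives $\sqrt n\,\hatf_n^\K\indistributionto{n}\N(0,V)$ with $V=\Lambda\Sigma^\varphi\Lambda'$, and that $V$ is positive definite since $\Lambda$ has full rank and $\Sigma^\varphi$ is assumed positive definite. Hence it suffices to exhibit an estimator $\hatSigma_n$, measurable with respect to $(X_0,\ldots,X_{n-1})$, with $\hatSigma_n\to\Sigma^\varphi$ in probability; setting $\hatV_n:=\Lambda\hatSigma_n\Lambda'$ then gives $\hatV_n\to V$ in probability, so (by continuity of matrix inversion near the nonsingular matrix $V$) $\hatV_n$ is invertible with probability tending to $1$ and $\hatV_n^{-1}\to V^{-1}$ in probability. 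Writing $\hateta_n=(\sqrt n\,\hatf_n^\K)'\,\hatV_n^{-1}\,(\sqrt n\,\hatf_n^\K)$, the joint convergence $(\sqrt n\,\hatf_n^\K,\hatV_n^{-1})\indistributionto{n}(Z,V^{-1})$ with $Z\sim\N(0,V)$ (Slutsky) and the continuous mapping theorem yield $\hateta_n\indistributionto{n}Z'V^{-1}Z$; since $V^{-1/2}Z\sim\N(0,I_5)$, the limit law is $\chi_5^2$, as claimed.

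\medskip
\noindent For $\hatSigma_n$ I would use a truncated (lag-window) analogue of the explicit formula for $\Sigma^\varphi(a,b;c,d)$ displayed above. Fix a truncation sequence $m_n\to\infty$ with $m_n^2/n\to0$ (for instance $m_n=\lfloor\log n\rfloor$). For $(a,b),(c,d)\in\A^2$ and $0\le k<n$, set, reading indices modulo $n$,
$$
\hatp_n(ab)=\frac{N_n(a,b)}n,\qquad
\hatp_n(ab;cd;k)=\frac1n\#\{i\in\{0,\ldots,n-1\}:(X_i,X_{i+1})=(a,b),\ (X_{i+k},X_{i+k+1})=(c,d)\},
$$
and define
$$
\hatSigma_n(a,b;c,d):=\bigl(\hatp_n(ab;cd;0)-\hatp_n(ab)\hatp_n(cd)\bigr)
+\sum_{k=1}^{m_n}\bigl(\hatp_n(ab;cd;k)-\hatp_n(ab)\hatp_n(cd)\bigr)
+\sum_{k=1}^{m_n}\bigl(\hatp_n(cd;ab;k)-\hatp_n(ab)\hatp_n(cd)\bigr).
$$
This is biased, precisely because of the truncation at lag $m_n$, but it will be seen to be consistent; then $\hatV_n=\Lambda\hatSigma_n\Lambda'$ is the desired estimator of $V$.

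\medskip
\noindent It remains to prove $\hatSigma_n(a,b;c,d)\to\Sigma^\varphi(a,b;c,d)$ in probability for each pair of indices, and this is where the real work lies. Decompose $\hatSigma_n=\EE\hatSigma_n+(\hatSigma_n-\EE\hatSigma_n)$. The deterministic error $\EE\hatSigma_n(a,b;c,d)-\Sigma^\varphi(a,b;c,d)$ equals, up to a term of order $m_n/n$ coming from the circular convention, the truncation tail $\sum_{k>m_n}\EE(X_0^{a,b}X_k^{c,d})+\sum_{k>m_n}\EE(X_0^{c,d}X_k^{a,b})$; by the exponential cluster property of Gibbs measures (Property 1.26 in \cite{bowen2008})---equivalently by estimate (\ref{eqn:geom.rate}) applied to the bounded H\"older functions $\varphi^{a,b}$---each summand is at most $\bar\delta\,\xi^k$ for some $\xi\in(0,1)$, so this tail is $O(\xi^{m_n})\to0$. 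For the stochastic part, each $\hatp_n(ab;cd;k)$ is an ergodic average of the bounded function $\ind_{[ab]_0}\cdot(\ind_{[cd]_0}\circ T^k)$ under the ergodic $T$-invariant measure $\PP$, so for any \emph{fixed} $k$ it converges $\PP$-a.s.\ to $\PP([ab]_0\cap[cd]_k)$; to control all lags up to $m_n$ simultaneously one uses that $\PP$ is exponentially mixing, so $\mathrm{Var}\bigl(\hatp_n(ab;cd;k)\bigr)=O(1/n)$ uniformly in $k$, whence $\EE\bigl|\hatSigma_n(a,b;c,d)-\EE\hatSigma_n(a,b;c,d)\bigr|^2=O(m_n^2/n)\to0$ for the chosen $m_n$. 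Combining the two bounds gives $L^2$, hence in-probability, convergence of $\hatSigma_n$ to $\Sigma^\varphi$, completing the argument. The main obstacle is precisely this last point---the uniform-in-lag control of the fluctuations of the empirical autocovariances---for which one may instead invoke the known consistency of lag-window (HAC) covariance estimators for exponentially mixing stationary sequences; everything else is routine bookkeeping with Slutsky's theorem and the full-rank factorisation $V=\Lambda\Sigma^\varphi\Lambda'$.
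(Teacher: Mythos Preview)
Your overall architecture coincides with the paper's: build a truncated empirical estimator $\hatSigma_{n,m_n}$ of $\Sigma^\varphi$, push it through $\Lambda(\cdot)\Lambda'$ to get $\hatV_n$, and finish with Slutsky and the continuous mapping theorem applied to $(\sqrt n\,\hatf_n^\K)'\,\hatV_n^{-1}\,(\sqrt n\,\hatf_n^\K)$. Where you differ is in the \emph{consistency} step for $\hatSigma_{n,m_n}$. The paper does not fix a rate for $m_n$ at all: it first shows $\hatSigma_{n,m}\to\Sigma^\varphi_{(m)}$ $\PP$-a.s.\ for each fixed $m$ by the ergodic theorem, and then runs a soft diagonal argument (metrize convergence in probability by $D$, choose $N(m)$ so that $D(\hatSigma_{n,k},\Sigma^\varphi_{(k)})\le 1/m$ for all $n\ge N(m)$ and $k\le m$, and set $m(n)=\sup\{m:N(m)\le n\}$) to extract \emph{some} sequence $m(n)\nearrow\infty$ along which $\hatSigma_{n,m(n)}\to\Sigma^\varphi$ in probability. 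Your route is instead quantitative: you bound bias and $L^2$ fluctuation directly from exponential mixing and land on an explicit admissible choice such as $m_n=\lfloor\log n\rfloor$. This buys a concrete, implementable truncation level---which the paper later has to select heuristically anyway---at the price of a slightly more careful variance computation; the paper's argument is shorter but purely existential.

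One small slip worth fixing: the claim $\mathrm{Var}\bigl(\hatp_n(ab;cd;k)\bigr)=O(1/n)$ \emph{uniformly in $k$} is not quite right. The indicator $\ind_{[ab]_0}\cdot(\ind_{[cd]_0}\circ T^k)$ depends on a block of length $k{+}2$, so its autocovariances at shifts $|l|\le k{+}1$ are only bounded by $1$, and summing gives $\mathrm{Var}=O(k/n)$ rather than $O(1/n)$. Consequently $\EE\bigl|\hatSigma_n-\EE\hatSigma_n\bigr|^2=O(m_n^3/n)$ rather than $O(m_n^2/n)$. This is harmless for $m_n=\lfloor\log n\rfloor$ (indeed for any $m_n$ with $m_n^3/n\to0$), so your argument goes through unchanged; just state the rate condition as $m_n^3/n\to0$.
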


The proof of this proposition will be a consequence of the following
constructions and intermediate results. 

In order to define the estimator $\hatV_n$ of the covariance matrix~$V$, we 
first require an estimator of~$\Sigma^\varphi$.  Let $\hatSigma_{n,m}= 
\left(\hatSigma_{n,m}(a,b;c,d): (a,b),(c,d)\in\A^2\right)$, where
\begin{eqnarray}
\nonumber
\hatSigma_{n,m}(a,b;c,d)&:=& 
\frac{N_n^{(0)}(a,b;c,d)}n - \frac{N_n(a,b)}n \cdot \frac{N_n(c,d)}n\\
\nonumber
&{}& \, + \sum_{i=1}^m \left( \frac{N_n^{(i)}(a,b;c,d)}n - 
\frac{N_n(a,b)}n \cdot \frac{N_n(c,d)}n \right) \\
\label{eqn:hatSigma.n.m}
&& \, + \sum_{i=1}^m \left(\frac{N_n^{(i)}(c,d;a,b)}n - 
\frac{N_n(a,b)}n \cdot \frac{N_n(c,d)}n \right)
\end{eqnarray}
and
$$
N_n^{(i)}(a,b;c,d) :=\#\{j\in \{0,\ldots, n-1\}:\, 
(X_j,X_{j+1}, X_{j+i},X_{j+i+1})=(a,b,c,d)\}\,.
$$
Recall that we treat genome sequences as circular, so that $X_{n+i} = X_i$ for 
$i=0,\ldots,n-1$.

Now, from the law of large numbers for Gibbs measures,
$$
\lim\limits_{n\to \infty}\frac{N_n^{(i)}(a,b;c,d)}n =\PP([ab]_0\cap[cd]_i) \;\;\;
\PP-\hbox{a.e.}
 $$
and so
$$
\lim\limits_{n\to \infty} \hatSigma_{n,m}(a,b;c,d)=\Sigma^\varphi_{(m)}(a,b;c,d)
\;\;\; \PP-\hbox{a.e.}
$$
where
\begin{eqnarray}
\nonumber
\Sigma^\varphi_{(m)}(a,b;c,d)&=& 
\PP([ab]_0\cap[cd]_0)-\PP([ab]_0)\PP([cd]_0) 
+ \sum_{i=1}^m \left[\PP([ab]_0\cap[cd]_i) -\PP([ab]_0)\PP([cd]_0) \right] \\
\label{eqn:sigma.n.m}
&{}& \, + \sum_{i=1}^m \left[\PP([cd]_0\cap[ab]_i) -\PP([ab]_0)\PP([cd]_0) \right]\,.
\end{eqnarray}
However,
\begin{equation}
\label{eqn:sigma.def}
\Sigma^\varphi(a,b;c,d)=\lim\limits_{m\to \infty}\Sigma^\varphi_{(m)}(a,b;c,d). 
\end{equation}

Now, we claim that there exists a sequence $(m(n): n\geq 1)$ which monotonically 
increases to~$\infty$ such that
$$
\hatSigma_{n,m(n)}(a,b;c,d) \inprobabilityto{n} \Sigma^\varphi(a,b;c,d)\,,
$$
where $\inprobabilityto{n}$ is used to denote convergence in probability.  
To show this, first recall that convergence in probability is metrizable
by some metric $D$, for instance, $D(g,h)=\EE\bigl((|g-h|/(1+|g-h|)\bigr)$.  Since
$\lim\limits_{n\to \infty} D(\hatSigma_{n,m}(a,b;c,d),\Sigma^\varphi_{(m)}(a,b;c,d))=0$, 
we deduce that for all $m\geq 1$, there exists a positive integer $N(m)$ satisfying
$$
\forall n\geq N(m)\,,
\forall \, k\in \{1,\ldots, m\}:\;\;\, 
D(\hatSigma_{n,k}(a,b;c,d),\Sigma^\varphi_{(k)}(a,b;c,d))\leq 1/m\,.
$$
The sequence $(N(m): m\geq 1)$ is increasing. Now for all $n<N(1)$, 
we set $m(n)=1$ and, for $n\geq N(1)$, we  define
$m(n)=\sup\{m: N(m)\leq n\}$. By construction $m(n)$ increases with $n$. On
the other hand, since $m(n)\geq m$ for all $n\geq N(m)$, we have
$\lim\limits_{n\to \infty}m(n)=\infty$.
By construction we have
$$
\forall n\geq N(1)\,,\,:\;\;\,
D(\hatSigma_{n,m(n)}(a,b;c,d),\Sigma^\varphi_{(m(n))}(a,b;c,d))\leq 1/m(n)\,,
$$
and the claim follows by letting $m\to\infty$ and taking account of (\ref{eqn:sigma.def}).

Let $\hatV_{n,m}=
\left(\hatV_{n,m}(a,b;c,d): (a,b),(c,d)\in\K\right)$, where
$$
\hatV_{n,m}(a,b;c,d) :=
\hatSigma_{n,m}(a,b;c,d)
+\hatSigma_{n,m}(\Gamma(b), \Gamma(a) ;\Gamma(d), \Gamma(c))
-\hatSigma_{n,m}(\Gamma(b), \Gamma(a); c,d)
-\hatSigma_{n,m}(a,b;\Gamma(d),\Gamma(c)).
$$
Since $\hatSigma_{n,m(n)}$ converges in probability to
$\Sigma^\varphi$, it follows that $\hatV_n:=\hatV_{n,m(n)}$ must converge in
probability to~$V$.  Furthermore, 
$\hatV_{n}^{-1}$ 
will also converge to $V^{-1}$ in probability, provided that 
$V$ is positive definite.
To summarize, we have shown that $\hatf_n^\K$ is a consistent (unbiased)
estimator of $f^\K$ while $\hatV_n$ constitutes a consistent (but biased)
estimator of $V$.

Next, we prove the asymptotic behavior of
$\hateta_n$ claimed in Proposition \ref{prop:testasym}. 
 Recall that~$V$ is positive definite since $\Sigma^\varphi$ is positive
definite.  We have shown that $\sqrt n\hatf_n^\K$ converges in distribution to
$\N(0,V)$ and $\hatV_n^{-1}$ converges in
probability to $V^{-1}$.  This implies that 
$$ 
n \hatf_n^\K{}'
\hatV_n^{-1} \hatf_n^\K - n  \hatf_n^\K{}'
V^{-1} \hatf_n^\K = n \hatf_n^\K{}' \left(
\hatV_n^{-1} -V^{-1}\right) \hatf_n^\K \inprobabilityto{n} 0. 
$$ 
Combining this with the aforementioned fact that $n  \hatf_n^\K{}' V^{-1} 
\hatf_n^\K$ converges in distribution to a $\chi_5^2$ random variable, we see 
that $\hateta_n$ converges in distribution to a $\chi_5^2$ distribution as 
$n\to\infty$ and hence Proposition~\ref{prop:testasym} has been proved.

\subsection{Practical considerations}

When computing the statistic $\hateta_n$ for a real genomic sequence, the
parameter~$n$ is dictated by the length (in bases) of the genome under study.
However, it is necessary to choose an appropriate value for the parameter~$m$
and this is not so straight forward. 
The regime $(m(n))$ derived in the previous 
subsection is not unique. In fact, the convergence results in the preceding 
subsection remain valid for any sequence that converges 
to~$\infty$ more slowly than $(m(n))$. 
Consequently, any value $m(n) \ll n$ should satisfy the consistency criterion. 
On the other hand, the exponential cluster property of Gibbs measures implies 
that terms of the series in~(\ref{eqn:sigma.n.m}) should tend geometrically 
toward zero and the same should also be true for terms of the series in 
(\ref{eqn:hatSigma.n.m}) when~$n$ is large.  Eventually, there should come a 
point after which the terms of (\ref{eqn:hatSigma.n.m}) will constitute noise of 
the estimators and these should be ignored.  Consistency of the estimator 
$\hatSigma_{n,m}$, together with the exponentially fast  convergence of 
$\Sigma_{(m)}^\varphi$ to~$\Sigma^\varphi$, means that satisfactory results 
should be obtainable by setting $m(n)$ small relative to~$n$ when computing 
$\hateta_n$.

In our implementation of this test of CSPR for dinucleotides, 
we chose $m(n)$ to be the smallest value of~$i$ such that
$$
\abs{\frac{N_n^{(i)}(a,b,a,b)}n - \left(\frac{N_n(a,b)}n\right)^2}
\leq 0.01 \left( \frac{N_n(a,b)}n - \left(\frac{N_n(a,b)}n\right)^2\right) = 0.01 \hatVar([ab]_0)
\;\; \forall (a,b)\in\A^2.
$$
Here, $\hatVar([ab]_0)$ denotes a consistent estimator of the variance of the 
frequency of the dinucleotide $ab$ in a genome sequence.  Since $\ds 
\abs{\frac{N_n(a,b)}n - \left(\frac{N_n(a,b)}n\right)^2}$ is typically on the 
order of $0.06$, we conjecture that truncating the covariance estimators at the 
point where the sums composing the estimators change by less than $1$\% of $\hatVar([ab]_0)$ is reasonable.

Finally, some numerical experimentation leads us to conjecture that the test is 
highly powerful.  Markov chains constitute a subset of the Gibbsian processes. 
Simulating genomic sequences from Markov chains which fail to comply with CSPR 
yields a rejection rate of $100$\% at the $5$\% significance level.  Performing 
the same experiments on Markov chains that do satisfy CSPR results in a 
rejection rate close to the~$\alpha$ chosen for the test, as one would expect. 
We obtained similar results for genomic sequences generated as realizations of 
Markov random fields.  A Markov random 
field with maximal clique size~$k$ is equivalent to a Gibbs measure whose energy~$\Psi$ takes the form
$$
\Psi(x) = \sum_{j=1}^k\sum_{i=0}^{n-1}\psi^{(j)}(x_i,\ldots,x_{i+j-1}).
 $$
In other words, the energy is a linear sum of functions $\Psi^{(j)}$, each of 
which depends on cliques of size~$j$, that is, sets of~$j$ mutually neighboring 
sites.  Simulations of such sequences can be produced using the Gibbs sampler 
and~$\Psi$ will be $\og\circ\I$-invariant if ~$\Psi^{(j)}$ is $\og\circ\I$-
invariant for all $j=1,\ldots,k$.  In our numerical experiments, we simulated 
sequences from Markov random fields having maximal clique sizes of~$3$ and~$4$, 
using an energy which is not $\og\circ\I$-invariant.

\subsection{Application of the test}

Although successful tests of CSPR in genomic sequences have already been 
conducted using both empirical and rigorous methods (for instance, see 
\cite{prabhu1993,mitchell,PNAS_Albrecht,hart&martinez2011}), we would like to 
test for CSPR for $R=2$ under a Gibbsian hypothesis.

We considered a set of $1049$ complete bacterial genome sequences obtained from 
the GenBank `genomes' repository. Length and $GC$-content statistics for the set 
of genomes are shown in Table~\ref{tab:lengths.gcs}.

\begin{table}[htbp]
\caption{Summary statistics for the lengths and $GC$-contents of a collection of 
$1049$ complete bacterial genome sequence obtained from the GenBank repository.}
\label{tab:lengths.gcs}

\begin{center}
\begin{tabular}{|lrrrrr|}
\hline
Property & First Quartile & Median & Third Quartile & Mean & Std Deviation \\
\hline
Length & $1906322$ & $2976212$ & $4603746$ & $3317355$ & $1759175$ \\
$GC$-content & $0.3769$ & $0.4753$ & $0.6035$ & $0.4839$ & $0.1326$ \\
\hline
\end{tabular}
\end{center}
\end{table}

To correct for multiple testing of a large number of genomes, we used the Holm-
Bonferroni method, whose application posed no difficulties since $p$-values for 
the test statistic are readily obtainable from the $\chi_5^2$ distribution.  Of 
the $1049$ genomes tested at the $\alpha=0.01$ level of significance, the null 
was accepted in $410$ cases and was rejected in the remaining $639$ genomes.  We 
found no  relationship between $GC$-content, genome length and rejection of the 
null.

Note that the Gibbsian assumption determines the form of the covariance matrix 
$\Sigma^\varphi$, which exists as a consequence of the exponential cluster 
property.  Any genomic sequence that departs significantly from exponential 
clustering will give rise to an $\hateta_n$ far out in the tail of the 
$\chi_5^2$ distribution, since $\Sigma^\varphi$ is likely to be near singular in 
such cases.  A caveat with the test proposed here is that when a sequence is 
rejected, the reason for its rejection is unclear.  Rejection could be due to 
either violation of CSPR or lack of compliance with the Gibbsian or translation 
invariance assumptions.  In 
any case, further examination is warranted in order to discover why a particular 
sequence is rejected.  On the other hand, given the test's apparently high 
sensitivity to departures from a Gibbsian structure, sequences for which the 
null hypothesis is accepted must comply much more closely to CSPR and exhibit a 
much stronger Gibbsian-like structure than those that are rejected.

\section*{Acknowledgements}

This work was supported by the CMM Basal CONICYT Program PFB 03. We would like 
to thank researchers and engineers in the Laboratory of Bioinformatics and Mathematics of 
the Genome in the CMM for providing assistance and advice. We also want to thank 
Dr Catherine Matias from CNRS, Laboratoire Statistique et G\'enome, Universit\'e 
\'Evry, for invaluable discussions.


\end{document}